\def\NAT@def@citea{\def\@citea{\NAT@separator}}
\theoremstyle{plain}
\newtheorem{theorem}{Theorem}[section]
\newtheorem{lemma}[theorem]{Lemma}
\theoremstyle{definition}
\theoremstyle{remark}
\newtheorem{remark}{Remark}
\begin{document}


\title{Comment on Maps preserving the dimension of fixed points of products of operators}
\author{
\name{S. Elouazzani \textsuperscript{a}, M. Elhodaibi \textsuperscript{a}\thanks{CONTACT S. Elouazzani. Email: Elouazzani.soufiane@ump.ac.ma}, and S. Saber \textsuperscript{a}.}
\affil{\textsuperscript{a}Departement of Mathematics, Labo LIABM, Faculty of Sciences, 60000 Oujda, Morocco;}
}

\maketitle

\begin{abstract}
Let $\mathcal{B} (X)$ be the algebra of all bounded linear operators on an infinite-dimensional complex Banach space $X$. In this note, we show that a lemma used in the proof of the main result of [ Taghavi and Hosseinzadeh, linear and Multilinear algebra (2013) 1285-1292.] has an incorrect proof. Then, instead of such a lemma, we provide two elementary lemmas to obtain a proof of Taghavi and Hosseinzadeh's main result free of errors. 
\end{abstract}

\begin{keywords}
Preserver problem; operator algebra; fixed point
\end{keywords}
\hspace{0.4cm}\textbf{AMS Subject Classifications:} 46J10; 47B48
\section{Introduction}
In the following, let $\mathcal{B}(X)$ denote the algebra of all bounded linear operators on an infinite-dimensional complex Banach space. For any $x \in X$ and $f \in X^*$, let $x \otimes f$ be the operator of rank at most one defined on $X$ by $(x\otimes f)z = f(z)x$ for all $z \in X$. Note that the rank one operator $x\otimes f$ is idempotent if and only if $f(x) = 1$ and is nilpotent if and only if $f(x) = 0.$
Denote by $\mathcal{F}_1(X), \mathcal{P}_1(X)$, $\mathcal{N}_1(X)$ and $\mathcal{P}(X)$ the set of all at most rank one operators, the set of all rank one idempotent operators, the set of all at most rank one nilpotent operators and the set of all idempotent operators in $\mathcal{B}(X)$, respectively. For an operator $T\in \mathcal{B}(X)$, a vector $x\in X$ is a fixed point of $T$, whenever we have $Tx= x$. The set of all fixed points of T will be denoted by $F(T)$.
Given $P, Q \in \mathcal{P}_1(X)$, we say that $P$ and $Q$ are orthogonal if $PQ=QP=0$. \\
Throughout this note, assume that $\phi$ is a surjective map on $\mathcal{B}(X)$ satisfying
\begin{equation}
    \operatorname{dim}(F(\phi(T) \phi(S)))=\operatorname{dim}(F(T S)),\;(S,\; T \in \mathcal{B}(X)) .
\end{equation}
In [4], Taghavi and Hosseinzadeh characterized such a map $\phi$ and their main result states that there exists an invertible operator $A \in \mathcal{B}(X)$ such that $\phi(T)=$ $A T A^{-1}$ for all $T \in \mathcal{B}(X)$, or $\phi(T)=-A T A^{-1}$ for all $T \in \mathcal{B}(X)$. Its proof uses several lemmas which were established in the aforecited paper. Among them, we mention here Lemma 2.11 which states that such a map $\phi$ preserves the orthogonality of rank-one idempotents in both directions. In the proof of such a lemma, Taghavi and Hosseinzadeh considered two rank-one idempotents $P$ and $Q$ such that $P Q=Q P=0$ and showed that both $\phi(P)$ and $\phi(Q)$ are too rank-one idempotents for which there is a bijective function $\alpha$ on $\mathbb{C}$ such that
\begin{equation}\label{eqpq}
    \operatorname{dim}(F(\alpha(\lambda) \phi(P) \phi(Q)))=\operatorname{dim}(F(\alpha(\lambda) \phi(Q) \phi(P)))=0
\end{equation}
 Then they claimed that if $\phi(P) \phi(Q)=\phi(Q) \phi(P)=0$ don't hold, there is a nonzero $x \in X$ and nonzero $f \in X^*$ such that $f(x) \neq 0$ and
$$
\phi(P) \phi(Q)=x \otimes f \text {. }
$$ Because they claimed that $f(x) \neq 0$, there is a nonzero scalar $\lambda_0 \in \mathbb{C}$ such that $\alpha\left(\lambda_0\right)=\frac{1}{f(x)}$ and thus $1=\operatorname{dim}\left(F\left(\alpha\left(\lambda_0\right) \phi(P) \phi(Q)\right)\right)$. This contradicts (\ref{eqpq}) and shows that
$$
\phi(P) \phi(Q)=\phi(Q) \phi(P)=0 ;
$$ as they wished. Note that if $\phi(P) \phi(Q)=\phi(Q) \phi(P)=0$ don't hold, then either
\begin{enumerate}
    \item[(1)] $\phi(P) \phi(Q) \neq 0$ and $\phi(Q) \phi(P) \neq 0$, or
\item[(2)]  $\phi(P) \phi(Q) \neq 0$ and $\phi(Q) \phi(P)=0$, or
\item[(3)] $\phi(P) \phi(Q)=0$ and $\phi(Q) \phi(P) \neq 0$.
\end{enumerate}
If (1) holds, then $f(x) \neq 0$ and the above arguments are correct. If, however, (2) or (3) holds then $f(x)=0$. And so, these two similar cases were not discussed by Taghavi and Hosseinzadeh and the above arguments won't work.

\section{Preliminaries and Correct version}
First of all, let $\phi: \mathcal{B}(X) \longrightarrow$ $\mathcal{B}(X)$ be a surjective map which satisfies
\begin{equation}\label{eq3}
    \operatorname{dim} F(ST)=\operatorname{dim} F(\phi(S) \phi(T)) \quad(S, T\in \mathcal{B}(X)).
\end{equation}
 Let $x, y \in X$ and $f,g \in X^*$, we say that $x\otimes f \sim y \otimes g$ if $x$ and $y$ are linearly dependent or $f$ and $g$ are linearly dependent.\\

\begin{remark}
In line 8 of the proof of \cite[Lemma 2.11]{ref4}, the authors took that $\alpha(\lambda)= f(x)^{-1}$, which is impossible if $\phi(P)\phi(Q)=x\otimes f $ is nilpotent rank one operator. 
In that way, by the proof of \cite[Lemma 2.11]{ref4}, if $P$ and $Q$ are rank-one idempotents such that $PQ= QP= 0$, we can easily observe that $$\dim F(\phi(P)\phi(Q))= \dim F(\phi(Q)\phi(P))=0,$$ and so from this and \cite[Lemma 2.6]{ref4} we have $\phi(P) \phi(Q)$ is a rank one nilpotent operator, which means that $\phi(P)\phi(Q)=x \otimes f $ with $f(x) =0$. Hence, we cannot put that $\alpha(\lambda) = f(x)^{-1}.$ And this was exactly the error made in \cite[Lemma 2.11]{ref4}.
\end{remark}
The next Lemma quoted from \cite{ref2}. 
\begin{lemma}(\cite[Proposition 2.7]{ref2})\label{lem2.11}
 Let $A$ and $B$ be linearly independent rank one operators. Then the following are equivalent.
\begin{enumerate}
\item[(i)] $A \sim B$.
\item[(ii)] There exists a $R \in \mathcal{F}_1(X)$ such that $R$ is linearly independent of $A$, of $B$ and for every $T \in \mathcal{B}(X)$ we have $AT, BT \in \mathcal{P}(X) \backslash\{0\}$ imply $RT \in \mathcal{P}(X) \backslash\{0\}$.
\end{enumerate}
\end{lemma}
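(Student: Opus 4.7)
The plan is to prove the two implications separately, writing $A = x \otimes f$ and $B = y \otimes g$ throughout and translating linear (in)dependence of rank-one operators into the corresponding statements on $(x, y)$ and $(f, g)$: $A, B$ are linearly independent iff not ($x \parallel y$ and $f \parallel g$), and $A \sim B$ iff $x \parallel y$ or $f \parallel g$.

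For (i) $\Rightarrow$ (ii): since $A, B$ are linearly independent and $A \sim B$, exactly one of $x \parallel y$ or $f \parallel g$ holds. Assume by symmetry $y = \mu x$ with $\mu \neq 0$ and $f, g$ linearly independent, and take $R := \tfrac12(A+B) = x \otimes \tfrac12(f + \mu g)$. This is a rank-one operator linearly independent of both $A$ and $B$ precisely because $f, g$ are (they share the ``vector'' part $x$, so independence is read off the functional parts). If $AT, BT \in \mathcal{P}(X)\setminus\{0\}$, a direct computation of traces forces $f(Tx) = 1$ and $\mu g(Tx) = 1$, hence $\tfrac12\bigl(f(Tx) + \mu g(Tx)\bigr) = 1$, so $RT$ is a nonzero rank-one idempotent. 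The symmetric case $g = \nu f$ is analogous with the roles of vectors and functionals exchanged.

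For (ii) $\Rightarrow$ (i) I would argue by contrapositive: assume $A \not\sim B$, so $\{x, y\}$ and $\{f, g\}$ are each linearly independent. Given any $R = z \otimes h$ linearly independent of $A$ and $B$ (which forces $z, h \neq 0$), I construct a single $T \in \mathcal{B}(X)$ with $f(Tx) = g(Ty) = 1$ but $h(Tz) \neq 1$, defeating condition (ii). When $z \notin \operatorname{span}\{x, y\}$, one prescribes $Tx, Ty, Tz$ independently on the three-dimensional subspace $\operatorname{span}\{x, y, z\}$ (picking any element in each of the affine hyperplanes $\{f = 1\}$, $\{g = 1\}$ and an element $c$ with $h(c) \neq 1$) and extends by zero across a Hahn--Banach complement. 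When $z = \alpha x + \beta y$, the relation $h(Tz) = \alpha h(Tx) + \beta h(Ty)$ must be controlled by varying $Tx \in \{u: f(u) = 1\}$ and $Ty \in \{u : g(u) = 1\}$.

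I expect the main obstacle to be the last sub-case, where additionally $h$ is parallel to $f$ or to $g$: then $h$ is constant along one of the two affine hyperplanes and a degree of freedom is lost. The escape is exactly the independence hypothesis on $R$: if $h = \lambda f$ then $R = \lambda z \otimes f$, and independence of $R$ from $A = x \otimes f$ forces $z \not\parallel x$, i.e., $\beta \neq 0$; since $f \not\parallel g$, the restriction $f|_{\{g = 1\}}$ is surjective onto $\mathbb{C}$, so $\beta \lambda f(Ty)$ still ranges over all complex values and $\alpha h(Tx) + \beta h(Ty) \neq 1$ can be arranged. The sub-case $h = \mu g$ is symmetric using independence of $R$ from $B$, and $h$ parallel to both $f$ and $g$ is impossible. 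This closes the contrapositive and the lemma.
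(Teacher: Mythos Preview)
The paper does not prove this lemma at all: it is quoted verbatim from \cite[Proposition 2.7]{ref2} (Fang--Ji--Pang) and used as a black box, so there is no in-paper argument to compare against.

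That said, your sketch is a correct self-contained proof and follows the natural lines one would expect. For (i)$\Rightarrow$(ii) the choice $R=\tfrac12(A+B)$ works exactly as you say: when $y=\mu x$ the trace condition $f(Tx)=1$, $\mu g(Tx)=1$ forces $\tfrac12(f+\mu g)(Tx)=1$, and the dual case is symmetric. For (ii)$\Rightarrow$(i), your contrapositive is sound. In the delicate sub-case $z=\alpha x+\beta y$ with $h=\lambda f$, the independence of $R=z\otimes\lambda f$ from $A=x\otimes f$ indeed forces $\beta\neq 0$, and then surjectivity of $f|_{\{g=1\}}$ (from $f\not\parallel g$) lets you steer $\beta\lambda f(Ty)$ freely; the case $h=\mu g$ is symmetric via independence from $B$. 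Two small points worth tightening if you write this out formally: (a) ``extend by zero across a Hahn--Banach complement'' is fine because finite-dimensional subspaces are always complemented, but it is cleaner to build $T$ directly as a finite-rank operator $\sum a_i\otimes\varphi_i$ using dual functionals $\varphi_i$; (b) in the generic sub-case of Case~2 (with $h$ parallel to neither $f$ nor $g$) you should note explicitly that at least one of $\alpha,\beta$ is nonzero, so one of the two free parameters $h(Tx)$, $h(Ty)$ is always available.
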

The following Lemma quoted from \cite{ref3}.
\begin{lemma}(\cite[Theorem 2.4]{ref3})\label{lem2}
Let $X$ be an infinite-dimensional Banach space and $\xi: \mathcal{P}_1(X) \rightarrow$ $\mathcal{P}_1(X)$ a bijective map preserving orthogonality in both directions. Then either there exists a bounded invertible linear or conjugate-linear operator $T: X \rightarrow X$ such that
$$
\xi(P)=T P T^{-1}, \quad P \in  \mathcal{P}_1(X),
$$
or there exists a bounded invertible linear or conjugate-linear operator $T: X^{*} \rightarrow$ $X$ such that
$$
\xi(P)=T P^{*} T^{-1}, \quad P \in  \mathcal{P}_1(X).
$$
In the second case $X$ must be reflexive.
\end{lemma}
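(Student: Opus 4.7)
The plan is to exploit the identification of each rank-one idempotent $P \in \mathcal{P}_1(X)$ with the pair $(L_P, H_P)$, where $L_P = \operatorname{Range}(P)$ is a one-dimensional subspace of $X$ and $H_P = \ker(P)$ is a closed hyperplane with $L_P \not\subseteq H_P$. Under this identification, two rank-one idempotents $P, Q$ are orthogonal if and only if $L_P \subseteq H_Q$ and $L_Q \subseteq H_P$. Hence the hypothesis on $\xi$ translates into a bijection of the incidence system of lines and closed hyperplanes of $X$ that preserves this mutual containment relation in both directions.

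First I would establish a dichotomy that is the geometric heart of the statement: $\xi$ either preserves the equivalence ``$P \sim_{\mathrm{rg}} Q$ iff $L_P = L_Q$'' (and, separately, ``$P \sim_{\mathrm{ker}} Q$ iff $H_P = H_Q$''), or it swaps these two equivalences. The point is that for a fixed line $L$ the family $\{P : L_P = L\}$ is intrinsically characterised by the orthogonality relation: two rank-one idempotents share a range exactly when they have the same orthogonality relations with every other element of $\mathcal{P}_1(X)$ up to a reversal of roles. Bijectivity and preservation of orthogonality in both directions then force a single global choice between the two alternatives.

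In the first alternative, define $\tilde\xi(L_P) := L_{\xi(P)}$ and check well-definedness on $\mathbb{P}(X)$; one verifies that $\tilde\xi$ preserves collinearity of lines because three lines are collinear exactly when there is a matching configuration of orthogonal rank-one idempotents realising them as ranges. Applying an infinite-dimensional version of the fundamental theorem of projective geometry yields a semilinear bijection $T: X \to X$; since the underlying field is $\mathbb{C}$ and $\xi$ respects enough analytic structure to force continuity of the induced field automorphism, $T$ must be linear or conjugate-linear, and comparing the kernel data shows $\xi(P) = T P T^{-1}$. In the second alternative, the analogous construction gives a semilinear bijection between $\mathbb{P}(X^*)$ and $\mathbb{P}(X)$, hence a bounded linear or conjugate-linear $T: X^* \to X$ with $\xi(P) = T P^* T^{-1}$; one then checks that the right-hand side lies in $\mathcal{P}_1(X)$ for every $P$ precisely when every closed hyperplane of $X$ arises as $\ker(\hat x)$ for some $x \in X$ acting on $X^*$, i.e.\ exactly when $X$ is reflexive.

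The main obstacle is carrying out the projective reconstruction in the Banach-space setting: one has to recover the full lattice of closed one-dimensional subspaces (and the scalar-field structure on them) purely from the orthogonality data, to control the semilinear part sufficiently to trigger automatic continuity of the associated field automorphism of $\mathbb{C}$, and to handle the reflexivity subtlety that separates the two cases. The preliminary dichotomy step is delicate but essentially combinatorial, and once it is settled the remaining work is bookkeeping around the fundamental theorem of projective geometry together with the standard linear/conjugate-linear alternative for continuous semilinear bijections of a complex Banach space.
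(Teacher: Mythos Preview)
The paper does not prove this lemma at all: it is quoted verbatim as \cite[Theorem 2.4]{ref3} and used as a black box, so there is no in-paper argument to compare your proposal against.

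Your outline is the standard route to results of this type and is broadly on target: identify $P\in\mathcal{P}_1(X)$ with the pair $(L_P,H_P)$, translate orthogonality into mutual incidence $L_P\subseteq H_Q$, $L_Q\subseteq H_P$, establish a global range/kernel dichotomy, and then invoke the fundamental theorem of projective geometry. Two places in your sketch are genuinely incomplete rather than merely abbreviated. First, your proposed characterisation of ``same range'' via orthogonality is not sharp enough: two idempotents with $L_P=L_Q$ but $H_P\neq H_Q$ do \emph{not} have identical orthogonality partners, so one needs a more refined combinatorial invariant (\v{S}emrl builds this in two stages, first characterising the relation ``$L_P=L_Q$ or $H_P=H_Q$'' and then separating the two alternatives). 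Second, the step from a semilinear bijection to a \emph{bounded} linear or conjugate-linear one is the analytic crux, and ``$\xi$ respects enough analytic structure to force continuity'' does not capture it: one must use that $\xi$ permutes \emph{closed} hyperplanes (equivalently, continuous functionals) to force boundedness of $T$, and only then does continuity of the associated field automorphism of $\mathbb{C}$---hence the linear/conjugate-linear alternative---follow. These are precisely the non-routine points in \v{S}emrl's original proof.
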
 

The next three Lemmas quoted from \cite{ref4}.
\begin{lemma}(\cite[Lemma 2.7]{ref4})\label{lem27}
     $\phi$ and $-\phi$ preserve rank one idempotent operators in both directions.
\end{lemma}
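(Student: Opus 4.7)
The plan is to identify a characterization of $\mathcal{P}_1(X)\cup(-\mathcal{P}_1(X))$ purely in terms of fixed-point dimensions of products, so that (3) transports it automatically between $T$ and $\phi(T)$. Specifically, I will show that $R\in\mathcal{B}(X)$ belongs to $\mathcal{P}_1(X)\cup(-\mathcal{P}_1(X))$ if and only if
\[
\dim F(R^2)=1 \quad\text{and}\quad \dim F(RS)\leq 1 \text{ for every } S\in\mathcal{B}(X).
\]
Since both conditions are phrased entirely in the language of (3), once this equivalence is established the lemma reduces to a bookkeeping exercise using the surjectivity of $\phi$.

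For necessity, if $P\in\mathcal{P}_1(X)$ then $P^2=P$, so $\dim F(P^2)=\dim F(P)=1$; and since $PS$ has rank at most one, a direct inspection of rank-one operators gives $\dim F(PS)\leq 1$. Moreover $(-P)^2=P$, so $-P$ satisfies the same two conditions. Transferring via (3), $\dim F(\phi(P)^2)=\dim F(P^2)=1$; and for an arbitrary $T\in\mathcal{B}(X)$ the surjectivity of $\phi$ lets me pick $S$ with $\phi(S)=T$, giving $\dim F(\phi(P)\,T)=\dim F(PS)\leq 1$.

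For sufficiency, suppose $R$ satisfies the two conditions. If $R$ had rank at least two, I would pick $u_1,u_2$ with $Ru_1,Ru_2$ linearly independent, choose $f_1,f_2\in X^{*}$ biorthogonal via $f_i(Ru_j)=\delta_{ij}$, and form $S=u_1\otimes f_1+u_2\otimes f_2$; a direct check gives $RS(Ru_i)=Ru_i$, producing two independent fixed points of $RS$ and contradicting the second condition. Hence $R$ has rank at most one; since $R=0$ is incompatible with $\dim F(R^2)=1$, I may write $R=x\otimes f$ with $x,f$ nonzero, so that $R^2=f(x)R$. A direct computation of the fixed points of the rank-one operator $f(x)(x\otimes f)$ then shows that $\dim F(R^2)=1$ forces $f(x)^2=1$, i.e., $f(x)\in\{1,-1\}$, placing $R$ in $\mathcal{P}_1(X)\cup(-\mathcal{P}_1(X))$. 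Applied to $\phi(P)$ this yields the forward direction; the reverse direction follows by running the same reasoning on any $P$ with $\phi(P)\in\mathcal{P}_1(X)\cup(-\mathcal{P}_1(X))$, since the two fixed-point conditions on $\phi(P)$ immediately transfer to $P$ through (3).

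Finally, the map $-\phi$ also satisfies (3), because $(-\phi)(S)(-\phi)(T)=\phi(S)\phi(T)$, so the same argument applies verbatim to $-\phi$ and yields the ``$\phi$ and $-\phi$'' phrasing in the conclusion. The main obstacle is the sufficiency half of the internal characterization, specifically the rank-one reduction from the universal hypothesis $\dim F(RS)\leq 1$; the sign ambiguity $f(x)=\pm 1$ emerging at the end of that argument is precisely what forces both $\phi$ and $-\phi$ to appear in the final statement.
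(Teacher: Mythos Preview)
The paper does not prove this lemma; it is quoted without proof from Taghavi and Hosseinzadeh \cite{ref4}, so there is no in-paper argument to compare against.

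Your proof is correct and self-contained. The internal characterization
\[
R\in\mathcal{P}_1(X)\cup(-\mathcal{P}_1(X))\ \Longleftrightarrow\ \dim F(R^2)=1\ \text{and}\ \dim F(RS)\le 1\ \text{for all }S\in\mathcal{B}(X)
\]
is exactly the kind of statement that transfers through (3) using only surjectivity, and each step checks out: the rank-reduction via $S=u_1\otimes f_1+u_2\otimes f_2$ with $f_i(Ru_j)=\delta_{ij}$ correctly produces two independent fixed points of $RS$, and for $R=x\otimes f$ the computation $\dim F(R^2)=1\Leftrightarrow f(x)^2=1$ is accurate since $F(\lambda\,x\otimes f)$ is one-dimensional precisely when $\lambda f(x)=1$.

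One interpretive remark: what you actually prove is that $\phi$ preserves the set $\mathcal{P}_1(X)\cup(-\mathcal{P}_1(X))$ in both directions, i.e.\ for each individual $P\in\mathcal{P}_1(X)$ one has $\phi(P)\in\mathcal{P}_1(X)$ or $-\phi(P)\in\mathcal{P}_1(X)$. This is the only coherent reading of the phrase ``$\phi$ and $-\phi$ preserve rank one idempotents,'' and it is what the subsequent lemmas in \cite{ref4} actually use; a \emph{global} choice of sign is not asserted at this stage and would require a separate argument, so your final paragraph should not be read as claiming one.
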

\begin{lemma}(\cite[Lemma 2.9]{ref4})\label{lem29}
    $\phi$ preserves rank one nilpotent operators in both directions.
\end{lemma}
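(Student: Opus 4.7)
The plan is first to show that $\phi$ preserves the class of nonzero rank-one operators in both directions, and then, within this class, to isolate the nilpotents using Lemma \ref{lem27}. Since a nonzero rank-one operator $x \otimes f$ is either nilpotent ($f(x) = 0$) or a nonzero scalar multiple of a rank-one idempotent ($f(x) \neq 0$), and Lemma \ref{lem27} already controls the idempotents (up to sign), it will suffice to rule out, for a rank-one nilpotent $N$, the possibility that $\phi(N) = \lambda P$ with $P \in \mathcal{P}_1(X)$ and $\lambda \neq 0$.

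First I collect preliminaries from (\ref{eq3}) and the surjectivity of $\phi$. I show $\phi(0) = 0$: if $\phi(T_0) = 0$ with $T_0 \neq 0$, then $\dim F(T_0 S) = \dim F(0) = 0$ for every $S$; choosing $S = y \otimes f$ with $T_0 y \neq 0$ and $f(T_0 y) = 1$ gives $T_0 y \in F(T_0 S)$, a contradiction. Next, $T$ has rank at most one iff $\dim F(TS) \leq 1$ for every $S \in \mathcal{B}(X)$: the nontrivial direction takes linearly independent $u = Ta, v = Tb \in \operatorname{Im}(T)$ and $S$ with $Su = a, Sv = b$, producing two linearly independent fixed points of $TS$. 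Combined with surjectivity and (\ref{eq3}), this gives $T \in \mathcal{F}_1(X) \Leftrightarrow \phi(T) \in \mathcal{F}_1(X)$, so $\phi$ preserves (nonzero) rank-one operators in both directions. Injectivity of $\phi$ follows analogously: $\phi(S_1) = \phi(S_2)$ forces $\dim F(TS_1) = \dim F(TS_2)$ for every $T$, and testing with rank-one $T = y \otimes f$ gives $S_1 y = S_2 y$ for all $y$.

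By Lemma \ref{lem27} and the fact that $-\phi$ also satisfies (\ref{eq3}), after replacing $\phi$ with $-\phi$ if necessary I assume $\phi$ preserves rank-one idempotents in both directions, so that $\phi$ restricts to a bijection on $\mathcal{P}_1(X)$. Let $N \in \mathcal{N}_1(X) \setminus \{0\}$ and suppose, for contradiction, $\phi(N) = \lambda P$ with $P \in \mathcal{P}_1(X)$ and $\lambda \neq 0$. Write $N = x \otimes f$ with $f(x) = 0$ and $P = y \otimes g$ with $g(y) = 1$. Applying (\ref{eq3}) with $S = N$ gives $\dim F(\lambda^2 P) = \dim F(\phi(N)^2) = \dim F(N^2) = 0$, which forces $\lambda^2 \neq 1$. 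If $\lambda = 1$ then $\phi(N) = P$, and the two-directional preservation of rank-one idempotents gives $N = \phi^{-1}(P) \in \mathcal{P}_1(X)$, contradicting $N \in \mathcal{N}_1(X)$. Hence $\lambda \notin \{0, \pm 1\}$.

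To close the argument I compare $\dim F(NR)$ with $\dim F(\phi(N)\phi(R))$ for a cleverly chosen $R \in \mathcal{P}_1(X)$. Pick $a \in X$ with $f(a) = 1$ and $b \in X^*$ with $b(a) = b(x) = 1$ (possible since $a$ and $x$ are linearly independent because $f(a) \neq f(x)$), and set $R := a \otimes b$. Direct computation gives $NR = f(a)(x \otimes b) = x \otimes b$, a rank-one idempotent since $b(x) = 1$, so $\dim F(NR) = 1$. Writing $\phi(R) = z \otimes k$ with $k(z) = 1$, one has $\phi(N)\phi(R) = \lambda g(z)(y \otimes k)$, whose fixed-point dimension is $1$ iff $\lambda g(z)k(y) = 1$. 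The equality $\dim F(\phi(N)\phi(R)) = \dim F(NR) = 1$ therefore forces $\lambda g(z)k(y) = 1$ for every $R$ of the above form. The main obstacle is to extract a contradiction from this family of scalar relations: the strategy is to vary the free parameters in $(a, b)$ and to show that the corresponding images $(z, k)$ cannot simultaneously satisfy this equality when $\lambda \notin \{0, \pm 1\}$, typically by exhibiting two admissible choices of $R$ whose $\phi$-images force incompatible values of $g(z)k(y)$; an alternative, perhaps cleaner, route would be to replace the above product characterization with a two-factor condition on rank-one nilpotents that transfers transparently under $\phi$ without residual scalar ambiguities.
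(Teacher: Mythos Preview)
The preparatory steps are sound: you correctly establish $\phi(0)=0$, that $\phi$ and $\phi^{-1}$ send nonzero rank-one operators to nonzero rank-one operators, and injectivity. The reduction to the case $\phi(N)=\lambda P$ with $P\in\mathcal{P}_1(X)$ and $\lambda\notin\{0,\pm 1\}$ is also fine. However, the argument then stops short of its goal. After deriving, for each admissible $R\in\mathcal{P}_1(X)$, the relation $\lambda\,g(z)\,k(y)=1$ (where $\phi(R)=z\otimes k$), you do not extract a contradiction; you explicitly write that ``the main obstacle is to extract a contradiction'' and offer only heuristic suggestions (``typically by exhibiting two admissible choices\dots''; ``an alternative\dots route would be\dots''). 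As it stands this is not a proof: the relation $\lambda\,g(z)\,k(y)=1$ is a constraint on $\phi(R)$, not an intrinsic inconsistency, and without further structural control over $\phi$ restricted to $\mathcal{P}_1(X)$ nothing prevents all the relevant images $\phi(R)$ from satisfying it simultaneously. The attempt is therefore incomplete at precisely the decisive step.

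Note also that the present paper does not give its own proof of this lemma; it simply quotes \cite{ref4}. The route taken there is different from yours: an earlier lemma in \cite{ref4} shows that $\phi$ maps scalar multiples of the identity to scalar multiples of the identity, via a bijection on $\mathbb{C}$. One can then use the characterization that a nonzero rank-one operator $A$ is nilpotent if and only if $\dim F\bigl((\lambda I)A\bigr)=\dim F(\lambda A)=0$ for every $\lambda\in\mathbb{C}$; this condition transfers directly through (\ref{eq3}) and the bijectivity of the scalar map, bypassing the scalar-matching obstacle you ran into.
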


\begin{lemma}(\cite[Lemma 2.10]{ref4})\label{lem210}
  $\phi (\lambda P) = \alpha ( \lambda) \phi(P)$ for every $P \in \mathcal{P}_1 (X)$ and all $\lambda \in \mathbb{C}$, where $\alpha $ is a bijective function on $\mathbb{C}$.
\end{lemma}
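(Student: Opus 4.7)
The plan is to prove, for each $P \in \mathcal{P}_1(X)$ and each $\lambda \in \mathbb{C}$, that $\phi(\lambda P)$ is a scalar multiple of $\phi(P)$, and then to verify that the resulting scalar function $\alpha$ is independent of $P$ and bijective on $\mathbb{C}$. Fix $P = x \otimes f$ with $f(x) = 1$, and by Lemma~\ref{lem27} write $\phi(P) = y \otimes g$ with $g(y) = 1$.

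For $\lambda = 0$, since $\dim F(S \cdot 0) = \dim F(0 \cdot S) = 0$ for every $S$, equation (\ref{eq3}) and surjectivity force $\dim F(B \phi(0)) = \dim F(\phi(0) B) = 0$ for every $B \in \mathcal{B}(X)$; if $\phi(0) z \neq 0$ for some $z$, then $B = z \otimes h$ with $h(\phi(0) z) = 1$ would satisfy $z \in F(B \phi(0))$, a contradiction. Hence $\phi(0) = 0$, and one sets $\alpha(0) = 0$.

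For $\lambda \neq 0$, since $(\lambda P) S$ has rank at most one, (\ref{eq3}) and surjectivity yield $\dim F(\phi(\lambda P) B) \leq 1$ for every $B$; a standard witness argument (two linearly independent vectors in the range of $\phi(\lambda P)$ can be made simultaneous fixed points of $\phi(\lambda P) B$ for an appropriate rank-two $B$) then forces $\phi(\lambda P) \in \mathcal{F}_1(X)$, and choosing $S$ with $\lambda f(Sx) = 1$ rules out $\phi(\lambda P) = 0$. Write $\phi(\lambda P) = y_\lambda \otimes g_\lambda$. The heart of the argument is to show that $y_\lambda$ is parallel to $y$ \emph{and} $g_\lambda$ is parallel to $g$; if $\phi(\lambda P)$ and $\phi(P)$ are already linearly dependent the claim is immediate, so assume otherwise and apply Lemma~\ref{lem2.11} to this pair. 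By Lemma~\ref{lem27} and surjectivity, the hypothesis ``$\phi(\lambda P) T, \phi(P) T \in \mathcal{P}(X) \setminus \{0\}$'' of condition (ii) translates through (\ref{eq3}) (write $T = \phi(T')$) into a condition on $(\lambda P) T', P T'$ being nonzero rank-one idempotents, which one verifies with a well-chosen witness $R$ (obtainable from a preimage of a suitable scalar combination of $\phi(\lambda P)$ and $\phi(P)$). This yields $\phi(\lambda P) \sim \phi(P)$; a symmetric argument using left products $T(\lambda P), T P$ supplies the complementary alignment, and together they give $\phi(\lambda P) = \alpha(\lambda) \phi(P)$ for some nonzero $\alpha(\lambda) \in \mathbb{C}$.

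Independence of $\alpha$ from $P$ is obtained by comparing $\dim F((\lambda P) T)$ with $\dim F((\lambda Q) T)$ for $P, Q \in \mathcal{P}_1(X)$ and a suitably chosen $T$, both of which collapse to the same scalar condition. Injectivity of $\alpha$ is immediate: $\alpha(\lambda) = \alpha(\mu)$ forces $\phi(\lambda P) = \phi(\mu P)$, whence $\dim F((\lambda P) S) = \dim F((\mu P) S)$ for all $S$, and varying $f(Sx)$ over $\mathbb{C}$ gives $\lambda = \mu$. Surjectivity of $\alpha$ follows from surjectivity of $\phi$: a preimage of $c \phi(P)$ is rank-one by the argument above and, reversing the alignment analysis, equals $\lambda P$ for some $\lambda$ with $\alpha(\lambda) = c$. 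The main obstacle is upgrading the conclusion of Lemma~\ref{lem2.11} from the weaker relation $\sim$ to genuine linear dependence; this forces one to deploy both left- and right-multiplication arguments and to choose the witness operators carefully at each application.
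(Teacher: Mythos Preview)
The paper does not prove this lemma at all: it is stated as a direct quotation of \cite[Lemma 2.10]{ref4}, with the argument left entirely to Taghavi and Hosseinzadeh, so there is no in-paper proof against which to compare your attempt.

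On the merits of your sketch, there is a genuine gap at the step where you pass from $\phi(\lambda P)\sim\phi(P)$ to actual linear dependence. Your right-product argument does yield $\phi(\lambda P)\sim\phi(P)$ for $\lambda\notin\{0,1\}$: writing $T=\phi(T')$, the simultaneous requirements $\dim F((\lambda P)T')=\dim F(PT')=1$ force $\lambda f(T'x)=1=f(T'x)$, which is impossible, so condition (ii) of Lemma~\ref{lem2.11} holds vacuously for any admissible $R$ (your ``well-chosen witness'' is therefore unnecessary). But $\sim$ only says that \emph{either} the ranges agree \emph{or} the functionals agree. Your claim that the left-product variant supplies the complementary alignment fails: for a rank-one $A=x_A\otimes f_A$ one has $AT\in\mathcal P(X)\setminus\{0\}$ if and only if $TA\in\mathcal P(X)\setminus\{0\}$, since both reduce to the single scalar condition $f_A(Tx_A)=1$; hence the left-product analogue of Lemma~\ref{lem2.11} characterises exactly the same relation $\sim$ and carries no additional information. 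To conclude $\phi(\lambda P)\in\mathbb C\,\phi(P)$ you need a separate argument establishing both alignments---typically via explicit test operators rather than a second appeal to the $\sim$-characterisation. The later steps (independence of $\alpha$ from $P$, surjectivity of $\alpha$) are stated only as strategy and presuppose this missing upgrade.
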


\begin{lemma}\label{rmk1}
    The assertion in Lemma \ref{lem210} is hold for every rank one nilpotent operator, too. 
    \end{lemma}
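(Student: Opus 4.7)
The plan is to decouple the assertion into two tasks: first, show that $\phi(\lambda N)$ is a scalar multiple of $\phi(N)$; second, compute the scalar using Lemma~\ref{lem210} and (\ref{eq3}). The case $\lambda = 0$ is immediate: by (\ref{eq3}) one has $\dim F(\phi(0) \phi(T)) = 0$ for every $T$, hence $\dim F(\phi(0) S) = 0$ for every $S$ by surjectivity of $\phi$, and since any nonzero operator $A$ admits a rank one $S = u \otimes h$ with $AS$ having a fixed point, this forces $\phi(0) = 0$; then $\alpha(0) = 0$ by Lemma~\ref{lem210} and both sides vanish. Assume henceforth $\lambda \neq 0$, write $N = x \otimes f$ with $f(x) = 0$, and use Lemma~\ref{lem29} to write $\phi(N) = y \otimes g$ and $\phi(\lambda N) = y_\lambda \otimes g_\lambda$, both nonzero rank one nilpotents.

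For the scalar-multiple step I will appeal to Lemma~\ref{lem2.11}: its characterization of $\sim$ involves only the clause ``$AT \in \mathcal{P}(X) \setminus \{0\}$'' for rank-at-most-one products $AT$, which is equivalent to the $\phi$-invariant condition $\dim F(AT) = 1$. Hence $\phi$ preserves $\sim$ on $\mathcal{F}_1(X)$. Introduce auxiliary rank one nilpotents $N_1 = x \otimes f_1$ (with $f_1 \notin \mathbb{C} f$, $f_1(x) = 0$) and $N_2 = x' \otimes f$ (with $x' \notin \mathbb{C} x$, $f(x') = 0$); both exist since $X$ and $\ker f$ are infinite dimensional. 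Then $N, \lambda N, N_1$ share the image $\mathbb{C} x$ and $N, \lambda N, N_2$ share the kernel $\ker f$, whereas $N_1 \not\sim N_2$. Under $\phi$, each of $\phi(N)$ and $\phi(\lambda N)$ must match $\phi(N_1)$ in either image or kernel, and match $\phi(N_2)$ in either image or kernel; matching on the same type with both $\phi(N_i)$ would contradict $\phi(N_1) \not\sim \phi(N_2)$, so each of $\phi(N), \phi(\lambda N)$ makes opposite type choices with $\phi(N_1)$ and $\phi(N_2)$. Finally, $\phi(N) \sim \phi(\lambda N)$ forces the two operators to make the \emph{same} opposite choice, which yields both image and kernel agreement between $\phi(N)$ and $\phi(\lambda N)$; hence $\phi(\lambda N) = c_\lambda \phi(N)$ for some $c_\lambda \in \mathbb{C}^{*}$.

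To identify $c_\lambda$, choose $P = a \otimes b$ with $f(a) = b(a) = b(x) = 1$ (possible because $a, x$ are linearly independent, as $f(a) = 1 \neq 0 = f(x)$). Then $NP = x \otimes b$ is a rank one idempotent, so $\phi(N) \phi(P)$ is one too (its $\dim F$ equals $\dim F(NP) = 1$), and for every $\mu \in \mathbb{C}^{*}$ Lemma~\ref{lem210} gives $\phi(\mu P) = \alpha(\mu) \phi(P)$. Combining with (\ref{eq3}),
\[
c_\lambda \alpha(\mu) = 1 \;\Longleftrightarrow\; \dim F\bigl(\phi(\lambda N) \phi(\mu P)\bigr) = 1 \;\Longleftrightarrow\; \lambda \mu = 1.
\]
Setting $\mu = 1/\lambda$ yields $c_\lambda \alpha(1/\lambda) = 1$. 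The auxiliary identity $\alpha(\lambda) \alpha(1/\lambda) = 1$ comes from the same kind of computation applied to $\lambda P$ and $(1/\lambda) P$: $P^2 = P$ gives $\dim F(\phi(\lambda P) \phi((1/\lambda)P)) = \dim F(P) = 1$, while the left side equals $\dim F(\alpha(\lambda) \alpha(1/\lambda) \phi(P))$. Therefore $c_\lambda = 1/\alpha(1/\lambda) = \alpha(\lambda)$, completing the proof.

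The main obstacle is the scalar-multiple step: (\ref{eq3}) by itself only delivers the weak conclusion $\phi(\lambda N) \sim \phi(N)$ (same image or same kernel, but not both), and the auxiliary nilpotents $N_1, N_2$ are the device that eliminates the mixed possibilities via the preserved inequivalence $\phi(N_1) \not\sim \phi(N_2)$, forcing both image and kernel of $\phi(\lambda N)$ to coincide with those of $\phi(N)$.
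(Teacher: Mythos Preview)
Your scalar-multiple step has a gap. You appeal to Lemma~\ref{lem2.11} and assert that ``its characterization of $\sim$ involves only the clause $AT\in\mathcal P(X)\setminus\{0\}$''. That is not quite right: condition~(ii) of Lemma~\ref{lem2.11} (equivalently Lemma~\ref{lem6}) also demands that the witness $R$ be \emph{linearly independent} of $A$ and of $B$. To push the characterization through $\phi$ you therefore need that $\phi(R)$ is linearly independent of $\phi(A)$ and of $\phi(B)$, and you never address this. In the paper's own logical order this independence is obtained precisely \emph{from} Lemma~\ref{rmk1} (together with Lemma~\ref{lem210} and the injectivity of $\phi$): look at the proof of Lemma~\ref{lem8}, which explicitly invokes Lemma~\ref{rmk1} at that point. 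So, as written, your argument is circular with respect to the paper. The same issue bites in the backward direction you need for $\phi(N_1)\not\sim\phi(N_2)$. The hole is not irreparable --- for instance, when $A=x\otimes f$, $B=x\otimes g$ share their image and $R=x\otimes(\alpha f+\beta g)$ with $\alpha+\beta=1$, one can argue directly from~(\ref{eq3}) that $\phi(R)=c\,\phi(A)$ forces $c=1$, whence $R=A$ by injectivity, a contradiction --- but that work, and its shared-kernel counterpart, are missing from your write-up.

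The paper itself does not go through $\sim$-preservation here at all: it simply notes that the proof of \cite[Lemma~3.6]{ref2} adapts verbatim to yield $\phi(\lambda N)=\alpha(\lambda)\phi(N)$ for $N\in\mathcal N_1(X)$. Your second half, computing $c_\lambda=\alpha(\lambda)$ via the auxiliary idempotent $P=a\otimes b$ and the identity $\alpha(\lambda)\alpha(1/\lambda)=1$, is correct and pleasantly self-contained.
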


    \begin{proof}
        With no extra efforts, just as in the proof of \cite[Lemma 3.6]{ref2}, we can get that $\phi(\lambda N) = \alpha(\lambda) \phi(N)$ for every $N \in \mathcal{N}_1 (X)$.
    \end{proof}
    
The Lemma 2.11 in \cite{ref4} should be replaced by the following results.\\\\
The following Lemma characterizes when two rank one operators are equivalent.
\begin{lemma}\label{lem6}
Let $A$ and $B$ be linearly independent rank one operators. Then the following are equivalent.

\begin{enumerate}
\item[(i)] $A \sim B$.
\item[(ii)] There exists a $R \in \mathcal{F}_1(X)$ such that $R$ is linearly independent of $A$, of $B$ and for every $T \in \mathcal{B}(X)$ we have $\dim F(AT ) =\dim F(BT)= 1 $ imply $\dim F(RT)=1$.
\end{enumerate}
\end{lemma}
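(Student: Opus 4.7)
My plan is to reduce this lemma directly to the already cited Lemma \ref{lem2.11}, by observing that the only change in the statement---replacing ``$AT,BT\in\mathcal{P}(X)\setminus\{0\}$'' and ``$RT\in\mathcal{P}(X)\setminus\{0\}$'' by the corresponding fixed-point-dimension conditions---is actually a change of phrasing. For any rank one operator $A$ and any $T\in\mathcal{B}(X)$, I claim that $AT\in\mathcal{P}(X)\setminus\{0\}$ is equivalent to $\dim F(AT)=1$. Once this equivalence is verified, conditions (ii) in the two lemmas coincide word for word.

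To establish the equivalence I would carry out a short direct computation. Writing $AT=y\otimes h$ (allowed since $AT$ has rank at most one, with $y\otimes h=0$ permitted) and using $(y\otimes h)^{2}=h(y)\,(y\otimes h)$, the operator $AT$ is a nonzero idempotent exactly when $y\otimes h\neq 0$ and $h(y)=1$. On the other hand, a vector $v$ is fixed by $y\otimes h$ iff $h(v)y=v$, which forces $v=\lambda y$ with $\lambda(1-h(y))=0$. A short case analysis then yields $F(AT)=\mathbb{C}y$ (dimension $1$) precisely when $AT$ is a rank one idempotent, $F(AT)=\{0\}$ when $AT$ is rank one but not idempotent, and $F(AT)=X$ when $AT=0$. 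Hence $\dim F(AT)=1$ if and only if $AT\in\mathcal{P}(X)\setminus\{0\}$, as wanted.

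Applying this equivalence simultaneously to $A$, $B$ and any candidate $R\in\mathcal{F}_1(X)$ transforms the implication in Lemma \ref{lem2.11}(ii) into the implication in Lemma \ref{lem6}(ii); the two statements (ii) are therefore logically identical. The proof is then concluded by simply invoking Lemma \ref{lem2.11} to get (i) $\Leftrightarrow$ (ii).

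I do not expect any substantial obstacle. The only delicate point is handling the degenerate case $AT=0$, where $F(AT)=X$ is infinite dimensional and so fails $\dim F(AT)=1$, which is exactly consistent with the exclusion of the zero operator from $\mathcal{P}(X)\setminus\{0\}$ on the other side of the equivalence; once this case is noted, the reduction is routine.
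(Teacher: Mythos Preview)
Your proposal is correct and follows essentially the same route as the paper's proof: both reduce Lemma~\ref{lem6} to Lemma~\ref{lem2.11} via the observation that, for an operator of rank at most one, being a nonzero idempotent is equivalent to having a one-dimensional fixed-point space. The only cosmetic difference is that you isolate and prove this equivalence once and then conclude both directions at once, whereas the paper runs the translation separately in each implication.
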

\begin{proof}
$(i) \Rightarrow (ii)$ By Lemma \ref{lem2.11}, There exists a $R \in \mathcal{F}_1(X)$ such that $R$ is linearly independent of $A$ and of $B$. On the other hand, for every $T \in \mathcal{B}(X)$, if $\dim F(A T)=\dim F(BT)=1$. Then, since $AT, BT \in \mathcal{F}_1(X)$, we have $AT, BT \in \mathcal{P}_1(X) \backslash\{0\} \subset \mathcal{P(X)} \backslash\{0\}$. Lemma \ref{lem2.11} implies that $RT \in \mathcal{P}(X) \backslash\{0\}$. Since $R \in \mathcal{F}_1(X), $ hence $R T \in \mathcal{P}_{1}(X) \backslash\{0\}$, and thus $\dim F(RT) = 1.$

$(ii)\Rightarrow (i)$ It is clear that, there exists a $R \in \mathcal{F}_1(X)$ such that $R$ is linearly independent of $A,$ of $B$ and for every $T \in \mathcal{B}(X)$ we have $AT, BT \in \mathcal{P}(X)\backslash \{0 \}$ imply that $RT \in \mathcal{P}(X) \backslash \{ 0 \}$. From Lemma \ref{lem2.11}, we get that $A\sim B$.
\end{proof}


\begin{lemma}\label{lem8}
    Let $A, B \in \mathcal{F}_1(X)$. Then $A\sim B$ if and only if $\phi (A) \sim \phi(B)$.
\end{lemma}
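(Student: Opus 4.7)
The plan is to reduce to Lemma \ref{lem6}, which characterizes $\sim$ between linearly independent rank-one operators by a fixed-point-dimension condition that is manifestly preserved by $\phi$ via (\ref{eq3}) and surjectivity.

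First I would handle the degenerate case in which $A$ and $B$ are linearly dependent. If either operator vanishes, both sides of the equivalence are immediate. Otherwise $B = \lambda A$ with $\lambda \neq 0$, and a nilpotent/idempotent parity check shows $A$ and $B$ must be simultaneously nonzero multiples of one and the same rank-one idempotent, or simultaneously nonzero rank-one nilpotents on a common line. Lemmas \ref{lem210} and \ref{rmk1} then yield $\phi(B) = \alpha(\lambda)\phi(A)$, so $\phi(A)$ and $\phi(B)$ are proportional and in particular $\phi(A) \sim \phi(B)$. The reverse implication in this case follows from the bijectivity of $\alpha$ together with the bijective preservation of rank-one idempotents/nilpotents by $\phi$ (Lemmas \ref{lem27} and \ref{lem29}).

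For the main case, assume $A$ and $B$ are linearly independent, and suppose $A \sim B$. Lemma \ref{lem6} supplies $R \in \mathcal{F}_1(X)$, linearly independent of $A$ and of $B$, such that $\dim F(AT) = \dim F(BT) = 1$ forces $\dim F(RT) = 1$ for every $T \in \mathcal{B}(X)$. I would propose $\phi(R)$ as the corresponding witness for $\phi(A) \sim \phi(B)$. The fixed-point condition transfers verbatim: any $S \in \mathcal{B}(X)$ equals $\phi(T)$ for some $T$ by surjectivity, and (\ref{eq3}) gives $\dim F(\phi(A)S) = \dim F(AT)$, with the analogous equalities for $B$ and $R$. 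What remains is that $\phi(A), \phi(B), \phi(R)$ are pairwise linearly independent, which is the contrapositive of the dependent case already dealt with. Lemma \ref{lem6} then yields $\phi(A) \sim \phi(B)$.

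The converse, that $\phi(A) \sim \phi(B)$ implies $A \sim B$, is obtained by running the same argument for $\phi^{-1}$, which inherits the structural hypotheses since (\ref{eq3}) is symmetric in $\phi$ and $\phi^{-1}$. The main technical hurdle I anticipate is the clean justification that linear independence is preserved in both directions; this rests on the bijectivity of the scalar function $\alpha$ from Lemma \ref{lem210} (and its nilpotent analogue in Lemma \ref{rmk1}), which ensures that distinct proportionality classes of rank-one idempotents (respectively, rank-one nilpotents) cannot collapse under $\phi$.
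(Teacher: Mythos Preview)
Your proposal is correct and follows essentially the same route as the paper: obtain the witness $R$ from Lemma~\ref{lem6}, transfer the fixed-point condition through (\ref{eq3}) and surjectivity, use Lemmas~\ref{lem210} and~\ref{rmk1} to preserve linear independence, and treat the converse via $\phi^{-1}$. You are more careful than the paper about the degenerate linearly-dependent case (which the paper's proof simply skips) and you correctly flag the independence-preservation step as the place where Lemmas~\ref{lem210} and~\ref{rmk1} do the work.
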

\begin{proof}
Let $A\sim B$. Lemma \ref{lem6} implies that there exists a $R \in \mathcal{F}_1(X)$ such that $R$ is linearly independent of $A,$ of $B$ and for every $T \in \mathcal{B}(X)$ we have $\dim F(AT)= \dim F(BT)=1 $ imply that $\dim F( RT) =1$. By Lemma \ref{lem210} and Lemma \ref{rmk1}, we get that $\phi(R)$ is linearly independent of $\phi(A)$ and of $\phi(B)$. Moreover, for every $T \in \mathcal{B}(X)$ we have $\dim F(\phi\left( A \right) \phi(T))=\dim F(\phi(B) \phi(T)) =1$ imply that $\dim F( \phi(R) \phi(T)) =1$. It follows from Lemma \ref{lem2.11} and the surjectivity of $\phi$ that $\phi\left(A\right) \sim \phi\left(B\right)$. By considering $\phi^{-1}$, the converse is similar. The proof is then complete.
\end{proof}

The next Lemma is quoted from \cite{ref1}.
\begin{lemma}(\cite[Lemma 2.12]{ref1}) \label{lem7}
Let $P$ and $Q$ with $P \neq Q$ be rank one idempotents. Then the following are equivalent.
\begin{enumerate}
    \item[(i)] $P$ and $Q$ are orthogonal.
    \item[(ii)] There exist rank one nilpotent operators $M$ and $N$ such that $P \sim N, P \sim M, Q \sim N, Q \sim M$ and $M \nsim N$.
\end{enumerate}
\end{lemma}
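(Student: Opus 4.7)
The plan is to work with the rank-one parametrization: write $P = x \otimes f$ with $f(x)=1$ and $Q = y \otimes g$ with $g(y)=1$. Under this description, orthogonality of $P$ and $Q$ is the pair of scalar equations $f(y)=0$ and $g(x)=0$, so everything reduces to linear-algebraic bookkeeping with $x,y \in X$ and $f,g \in X^{*}$.

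For the direction $(i) \Rightarrow (ii)$, I would simply exhibit the operators by hand. Assuming $f(y) = g(x) = 0$, set
$$N := x \otimes g, \qquad M := y \otimes f.$$
Then $g(x)=0$ and $f(y)=0$ ensure that $N$ and $M$ are rank-one nilpotent. The shared vector $x$ gives $P \sim N$, the shared functional $g$ gives $Q \sim N$, and similarly $P \sim M$ (via $f$) and $Q \sim M$ (via $y$). Finally $M \nsim N$ because $\{x,y\}$ are linearly independent (since $f(x)=1$ while $f(y)=0$) and $\{f,g\}$ are linearly independent (since $g(y)=1$ while $g(x)=0$).

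For the direction $(ii) \Rightarrow (i)$, write $M = u \otimes h$ and $N = v \otimes k$ with $h(u)=k(v)=0$. Each of the four $\sim$-relations yields one of two linear dependencies, so one must rule out the inconsistent combinations using $M \nsim N$, which asserts both $u \nparallel v$ and $h \nparallel k$. The relations $P \sim M$ and $P \sim N$ cannot both come from $x$-dependence (that would force $u \parallel v$) nor both from $f$-dependence (that would force $h \parallel k$). So, up to interchanging $M$ and $N$, one may assume $h = \alpha f$ (with $\alpha\ne 0$) and $v = \beta x$ (with $\beta\ne 0$), whence nilpotence gives $f(u)=0$ and $k(x)=0$. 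Now analyze $Q \sim M$ and $Q \sim N$: again one cannot have both dependencies landing on $g$ (gives $h\parallel k$) or both on $y$ (gives $u\parallel v$). The remaining mixed case $g = \delta h$ and $y = \beta'' v$ collapses to $Q = x \otimes (\beta''\delta\alpha f)$ which, after using $g(y)=1$, forces $Q = x \otimes f = P$, contradicting $P \neq Q$. Hence the surviving case is $y = \sigma u$ and $g = \tau k$, which respectively give $f(y) = \sigma f(u) = 0$ and $g(x) = \tau k(x) = 0$, establishing orthogonality.

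The main obstacle is purely organizational: the four dependence hypotheses together with the two ``non-equivalence'' constraints produce a moderately branching case tree, and one must verify carefully that the only branches compatible with $P \neq Q$ are exactly the ones forcing $f(y)=0$ and $g(x)=0$. The key leverage in every branch is $M \nsim N$, which is what prevents the linear-dependence flags from all landing on a single vector or single functional.
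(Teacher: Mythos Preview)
Your argument is correct. The paper itself does not supply a proof of this lemma: it is simply quoted from \cite{ref1} (Li, \v{S}emrl, Sze), so there is no in-paper proof to compare against. Your direct computation with the rank-one parametrisation $P=x\otimes f$, $Q=y\otimes g$ is exactly the natural way to establish the result, and both directions are handled cleanly. One small remark on the $(ii)\Rightarrow(i)$ case analysis: it is worth noting explicitly that for a rank-one idempotent $P$ and a rank-one nilpotent $M$, the relation $P\sim M$ can never hold for \emph{both} reasons simultaneously (otherwise $M$ would be a nonzero scalar multiple of $P$, hence not nilpotent); this is implicitly what makes your ``exactly one route'' dichotomy valid. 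Also, in Sub-case~A you dropped a factor of $\beta$ when writing $Q = x\otimes(\beta''\delta\alpha f)$ --- it should be $\beta''\beta\delta\alpha$ --- but since the next line uses $g(y)=1$ to normalise the scalar to $1$ anyway, the conclusion $Q=P$ is unaffected.
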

Automatically, the proof of the Lemma 2.12 in \cite{ref4} changes.
\begin{lemma} 
Assume that $\phi: \mathcal{B}(X) \longrightarrow$ $\mathcal{B}(X)$ is a map satisfying (\ref{eq3}). Then one of the following situations holds.
\begin{enumerate}
    \item[(i)] There exists a bounded bijective linear or conjugate linear operator $S: X \rightarrow X$ such that 
\begin{equation}
    \phi(P)=S P S^{-1}
    \label{eq1}
\end{equation} for every $P \in \mathcal{P}_1(X)$.
\item[(ii)] $X$ is reflexive and there exists a bounded bijective linear or conjugate linear operator $S: X^* \rightarrow X$ such that \begin{equation} \label{eq2}
    \phi(P)=S P^* S^{-1}
\end{equation} for every $P \in \mathcal{P}_1(X)$.
\end{enumerate}
\end{lemma}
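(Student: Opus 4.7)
The plan is to reduce the statement directly to Lemma~\ref{lem2}, whose hypotheses ask for a bijection $\mathcal{P}_1(X) \to \mathcal{P}_1(X)$ preserving orthogonality in both directions, and whose two conclusions already correspond exactly to the alternatives (i) and (ii). After possibly replacing $\phi$ by $-\phi$, it therefore suffices to check that $\xi := \phi|_{\mathcal{P}_1(X)}$ is such a bijection. All the raw material is present in the preceding lemmas, so the task is orchestration rather than the construction of anything new.

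For the bijection: Lemma~\ref{lem27} (with the appropriate sign choice) sends $\mathcal{P}_1(X)$ into $\mathcal{P}_1(X)$, and the ``in both directions'' clause combined with surjectivity of $\phi$ on $\mathcal{B}(X)$ yields surjectivity of the restriction onto $\mathcal{P}_1(X)$. For injectivity on $\mathcal{P}_1(X)$, if $\phi(P)=\phi(P')$ for $P, P' \in \mathcal{P}_1(X)$, then applying (\ref{eq3}) twice gives $\dim F(PT) = \dim F(\phi(P)\phi(T)) = \dim F(\phi(P')\phi(T)) = \dim F(P'T)$ for every $T$; testing against sufficiently many $T \in \mathcal{P}_1(X)$ then forces $P = P'$ by a standard argument.

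The core step is two-way preservation of orthogonality of rank-one idempotents. Here Lemma~\ref{lem7} does the heavy lifting: it reformulates orthogonality of distinct $P, Q \in \mathcal{P}_1(X)$ as the purely ``rank-one'' existence statement that there are rank-one nilpotents $M, N$ with $P \sim M$, $P \sim N$, $Q \sim M$, $Q \sim N$ and $M \nsim N$. Each ingredient travels through $\phi$: Lemma~\ref{lem29} preserves and reflects rank-one nilpotence, Lemma~\ref{lem8} preserves and reflects the relation $\sim$ on rank-one operators, and Lemma~\ref{lem27} handles rank-one idempotence. Thus the Lemma~\ref{lem7} criterion holds for $(P,Q)$ if and only if it holds for $(\phi(P), \phi(Q))$, which is precisely orthogonality preservation in both directions.

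Applying Lemma~\ref{lem2} to $\xi$ then produces the two stated alternatives. I expect the main obstacle to be not any single deep step but rather the small bookkeeping around the $\pm\phi$ alternative coming out of Lemma~\ref{lem27} (one has to confirm that the ``wrong'' sign choice has been ruled out before invoking Lemma~\ref{lem2}), together with a careful check of injectivity on $\mathcal{P}_1(X)$; once these are dispatched, the remainder is an essentially mechanical translation through Lemmas~\ref{lem7}, \ref{lem8}, \ref{lem27}, and \ref{lem29}.
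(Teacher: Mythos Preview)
Your proposal is correct and follows essentially the same route as the paper: combine Lemmas~\ref{lem27}, \ref{lem29}, \ref{lem8}, and \ref{lem7} to show that $\phi$ preserves orthogonality of rank-one idempotents in both directions, and then invoke Lemma~\ref{lem2}. You are in fact more careful than the paper's very terse proof, which neither explicitly verifies that $\phi|_{\mathcal{P}_1(X)}$ is a bijection nor comments on the $\pm\phi$ ambiguity coming out of Lemma~\ref{lem27}.
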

\begin{proof}
    Applying Lemma \ref{lem27}, Lemma \ref{lem29}, Lemma \ref{lem8} and Lemma \ref{lem7}. We get that $\phi$ preserves the orthogonality of rank one idempotents in both directions.
    Moreover, Lemma \ref{lem2} implies that $\phi$ have the form (\ref{eq1}) or (\ref{eq2}).\\\\
\end{proof}

\begin{remark}
     Taghavi and Hosseinzadeh's arguments are correct when $X=H$ is a Hilbert space. Indeed, consider two rank-one idempotents $P$ and $Q$ in $\mathcal{B}(H)$ such that $P Q=Q P=0$ and as showed by Taghavi and Hosseinzadeh both $\phi(P)$ and $\phi(Q)$ are too rank-one idempotents. Note that, since $\phi(P) \phi(Q)=0$ implies $0=(\phi(P) \phi(Q))^{*}=\phi(Q)^{*} \phi(P)^{*}=\phi(Q) \phi(P)$, one observes that (2) and (3) can not occur.
\end{remark}


\begin{thebibliography}{0}

\bibitem{ref2} L. Fang, G. Ji, and Y. Pang, Maps preserving the idempotency of products of operators, Linear algebra and its applications, 426(1), 40-52, (2007).

\bibitem{ref1} C.-K. Li, P. Šemrl, N. S. ui Sze, Maps preserving the nilpotency of operators, Linear algebra and its applications, 424(1), 222-239, (2007).

\bibitem{ref3} P. Šemrl, Non-linear commutativity preserving maps, Acta Sci. Math.(Szeged), 71(3-4), 781-819, (2005).

\bibitem{ref4} A. Taghavi and R. Hosseinzadeh, Maps preserving the dimension of fixed points of products of operators. Linear and multilinear algebra, 62(10), 1285-1292, (2014).


\end{thebibliography}
\end{document}